\newtheorem{theorem}{Theorem}
\newcounter{Ltheorem}
\newtheorem{lemma}[Ltheorem]{Lemma}
\theoremstyle{definition}\newtheorem{definition}{Definition}
\title{Antiderivatives Exist without Integration}
\author{Charles Coppin}
\begin{document}
\maketitle

\begin{abstract}  
We present a proof that any continuous function with domain including a closed interval yields an antiderivative of that function on that interval.  This is done without the need of any integration comparable to that of Riemann, Cauchy, or Darboux.  The proof is based on one given by Lebesgue in 1905.
\end{abstract}

H. Lebesgue~\cite{Leb} proved in 1905 that an antiderivative of a continuous function with domain including a closed interval exists without knowledge of the Riemann Integral or any of its equivalent forms.  Quoting Lebesgue, 
\begin{quote}Au commencement du cours de calcul int\'{e}gral on d\'{e}montre l'existence de fontions primitives pour les fonctions continues et l'on \'{e}tablit les relations qui lient ces fonctions primitives aux int\'{e}grales d\'{e}finies. La m\'{e}thode universellement adopt\'{e}e pour cela est celle de Cauchy; l'un des avantages de cette m\'{e}thode est de pr\'{e}pararer les g\'{e}n\'{e}ralisations de l'int\'{e}grale qu'ont donn\'{e}es Riemann et M. Darboux.  Cependant, si l'on se limite dans tout le cours aux functions continues, on peut peut-\^{e}tre la remplacer par la suivante \`{a} peine diff\'{e}rente, mais qui me parait plus simple.
\end{quote}

Lebesgue's 1905 paper is in French, which is a primary reason that this beautiful proof of his is not known in the English speaking world.  
Our purpose is two fold: 
\begin{itemize}
\item Give the main construction that Lebesgue used.
\item Outline of a proof using modern ideas normally found in an undergraduate course in analysis.
\end{itemize}
\section*{Lebesgue's Construction}  
 
Throughout, let $f$ be a function that is continuous on an interval $[a, b]$.

We use Lebesgue's notation and give a modern rendition of his construction. Suppose a set of $n$ points $\{(a_{0},d_{0}), (a_{1},d_{1}), \dots, (a_{n},d_{n})\}$,  $a=a_{0} < a_{1} < \dots < a_{n}=b$ are points of the interval $[a,b]$.   If desired, allow $f(a_{i}) = d_{i}, i = 0, 1, \dots, n$.
Lebesgue defines a continuous function $\phi$ with domain including $[a,b]$ such that 
for $i=0,1,\dots,n-1$, there are numbers $m_{i},b_{i}$ such that $\phi(x) = m_{i}x+b_{i}$ for each $x$ in $[a_{i},a_{i+1}]$, and  $m_{i}a_{i+1}+b_{i}=m_{i+1}a_{i+1}+b_{i+1}$ holds for $i=0,1,\dots,n-2$.    
 
Lebesgue defines an antiderivative $\Phi$ for $\phi$ on $[a,b]$ as follows:
\begin{enumerate}
\item Let $\Phi_{0}(x) = (m_{0}/2)x^{2} + b_{0}x - (m_{0}/2)a_{0}^{2}-b_{0}a_{0}$ for each $x$ in $[a_{0},a_{1}]$.  
\item Define $\Phi_{1}(x) = (m_{1}/2)x^{2} + b_{1}x + \Phi_{0}(a_{1}) - (m_{1}/2)a_{1}^{2}-b_{1}a_{1}$ for each $x$ in $[a_{1},a_{2}]$.  Inductively, define $\Phi_{i}, i=2,3,\dots,n-1$.  
\item Since $\Phi_{0}(a_{0}), \Phi_{1}(a_{1}), \dots, \Phi_{n-1}(a_{n-1})$ are well defined, the function $\Phi$ is defined as follows:
\begin{equation*}
\Phi(x) = \begin{cases}
	\displaystyle \frac{m_{0}}{2}x^{2} + b_{0}x - \frac{m_{0}}{2}a_{0}^{2} - b_{0}a_{0}, & \text{if $x \in [a_{0},a_{1}]$}; \\
	& \\
	\displaystyle \frac{m_{i}}{2}x^{2} + b_{i}x +\Phi_{i-1}(a_{i})- \frac{m_{i}}{2}a_{i}^{2} - b_{i}a_{i}, & \text{if $x \in [a_{i},a_{i+1}]$},i=1,2,\dots,n-1.
		\end{cases}
\end{equation*}
\end{enumerate}

The constructed function $\Phi$ consists of $n$ second degree polynomials whose left and right slopes at $a_{1},a_{2},\dots,a_{n-1}$, respectively, are equal. 

From the above construction, it can be shown that there is a function $F$ whose derivative is $f$.  This is the unique contribution of Lebesgue.  The reader may see how the proof might proceed from this point; however, we give an outline below.

\section*{Outline of Proof}

For the following, we use partition of $[a,b]$ to mean any finite collection of subintervals of $[a,b]$ that are non-overlapping and whose union is $[a,b]$.  A refinement $P^{\prime}$ of a partition $P$ is merely another partition of $[a,b]$ such that each end point of each member of $P$ is also an end point of a member of $P^{\prime}$.  For $n=1,2,\dots$, we let $P_{n}$ denote a regular partition of $[a,b]$ with $2^{n-1}$ members each having length $(b-a)/2^{n-1}$. This makes $P_{m}$ a refinement of $P_{n}$ for  positive integers $m,n$ where $m \ge n$.  The functions $\phi_{n}$ and $\Phi_{n}$ will be based on the partition $P_{n}$ for $n =1,2, \dots$.  
When $P_{n}$, $\phi_{n}$, and $\Phi_{n}$ are used, assume that the subscript is a positive integer unless otherwise stated. 

We remind the reader that any continuous function achieves extrema on any closed interval in its domain.  This makes the following definition non-vacuous.

\begin{definition} Suppose $P$ is a partition of $[a,b]$. By the {\bf oscillation} of $f$ on $\delta \in P$ (written $\omega_{\delta}$), we mean the real number
\[\omega_{\delta} =  \underset{\delta}{\max} f - \underset{\delta}{\min} f.\]
Moreover, by the {\bf total oscillation} of $f$ on a partition $P$ of $[a,b]$ (written $\Omega(P)$), we mean  the maximum value of the finite set of oscillations $\{\omega_{\delta} : \delta \in P\}$. 
\end{definition}

Whenever each of $m$ and $n$ is a positive integer with $m \ge n$ and $\Omega_{n} < \epsilon$ for some $\epsilon > 0$, then $\Omega_{m} < \epsilon$. (Remember that $P_{m}$ is a refinement of $P_{n}$.) This result is an application of the previous definitions of oscillation and total oscillation.  We state the following  lemmas without proofs each ultimately being an application of the Heine-Borel Theorem or the basic definitions of oscillation and total oscillation given above. 

\begin{lemma} \label{ep2}
For each $\epsilon >0$, there is a positive integer $n$ such that  
$\Omega_{m} < \epsilon$ for each positive integer $m \ge n$; that is, $\Omega_{n} \rightarrow 0$ as $n \rightarrow \infty$.
\end{lemma}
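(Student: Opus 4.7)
The plan is to reduce the lemma to the uniform continuity of $f$ on $[a,b]$, which is where the Heine--Borel theorem enters. Fix $\epsilon>0$. Since $f$ is continuous on the compact interval $[a,b]$, it is uniformly continuous there, so there exists $\delta>0$ such that whenever $x,y\in[a,b]$ satisfy $|x-y|<\delta$, one has $|f(x)-f(y)|<\epsilon$. (If one wants to avoid quoting uniform continuity as a black box, the standard Heine--Borel argument works: for each $x\in[a,b]$ choose an open interval $I_x$ about $x$ on which $f$ oscillates by less than $\epsilon/2$, extract a finite subcover of $[a,b]$, and let $\delta$ be its Lebesgue number.)

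Next, I would translate the uniform bound into a bound on $\Omega_n$. Choose $n$ so large that the common length $(b-a)/2^{n-1}$ of the members of $P_n$ is less than $\delta$. Fix any subinterval $\eta\in P_n$. Because $f$ is continuous on the closed interval $\eta$, it attains its maximum and minimum there at points $x_{\max},x_{\min}\in\eta$; since $|x_{\max}-x_{\min}|\le(b-a)/2^{n-1}<\delta$, uniform continuity gives $\omega_{\eta}=f(x_{\max})-f(x_{\min})<\epsilon$. Taking the maximum over the finitely many $\eta\in P_n$ yields $\Omega_n<\epsilon$.

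Finally, I would extend the inequality to every $m\ge n$ by quoting the refinement observation stated in the paragraph just before the lemma: since $P_m$ refines $P_n$ for $m\ge n$, and $\Omega_n<\epsilon$, we automatically have $\Omega_m<\epsilon$. This gives the desired $n$, and since $\epsilon$ was arbitrary, $\Omega_n\to 0$.

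The only real obstacle is the first step: producing the uniform modulus $\delta$. Everything else is bookkeeping. Once uniform continuity is in hand, the dyadic geometry of the partitions $P_n$ makes the passage from pointwise control to partition-level control routine, and the refinement fact disposes of the ``for each $m\ge n$'' clause without additional work.
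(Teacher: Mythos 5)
Your proof is correct and follows precisely the route the paper intends: the paper states this lemma without proof, remarking only that it is ``an application of the Heine--Borel Theorem,'' which is exactly the uniform-continuity argument you give, combined with the refinement observation to handle all $m \ge n$. Nothing is missing.
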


\begin{lemma} \label{Omega4}  
For each positive integer $n$, $ |f(x) - \phi_{n}(x)| \le \Omega_{n}$ for each $x$ in $[a,b]$; that is, $\phi_{n}(x)$ converges to $f(x)$ for each $x \in [a,b]$.
\end{lemma}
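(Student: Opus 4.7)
The plan is to exploit the piecewise-linear structure of $\phi_n$ together with the interpolation property at the nodes of $P_n$. The construction in the first section allows the choice $d_i = f(a_i)$, and I will assume that choice has been made when building $\phi_n$, so that $\phi_n(a_i) = f(a_i)$ at every partition point of $P_n$. Fix $x \in [a,b]$ and let $\delta = [a_i, a_{i+1}]$ be the member of $P_n$ that contains $x$.

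First I would note that, because $\phi_n$ is linear on $\delta$, the value $\phi_n(x)$ lies between its endpoint values $\phi_n(a_i) = f(a_i)$ and $\phi_n(a_{i+1}) = f(a_{i+1})$. Since both endpoint values belong to the range of $f$ on $\delta$, we get
\[
\min_{\delta} f \;\le\; \phi_n(x) \;\le\; \max_{\delta} f.
\]
The same bounds hold trivially for $f(x)$ itself, so the two quantities $f(x)$ and $\phi_n(x)$ lie in a common interval of length $\omega_\delta$. Therefore
\[
|f(x) - \phi_n(x)| \;\le\; \max_{\delta} f - \min_{\delta} f \;=\; \omega_\delta \;\le\; \Omega_n,
\]
which is the stated inequality. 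This step is where continuity of $f$ is used only implicitly: it guarantees (via the remark preceding the definition of oscillation) that $\max_\delta f$ and $\min_\delta f$ actually exist.

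For the convergence conclusion, I would simply combine the inequality just established with Lemma~\ref{ep2}: given $\epsilon>0$, pick $n$ so large that $\Omega_m<\epsilon$ for all $m\ge n$; then $|f(x)-\phi_m(x)|<\epsilon$ for every $x\in[a,b]$ and every $m\ge n$. This in fact delivers uniform convergence of $\phi_n$ to $f$ on $[a,b]$, which is stronger than the pointwise statement requested, and it will be the form needed later when differentiating $\Phi_n$.

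Honestly there is no hard step here; the only thing to be careful about is the justification that $\phi_n(x)$ lies between $f(a_i)$ and $f(a_{i+1})$ on $\delta$. That is immediate from linearity, but it is the one place where the specific construction of $\phi_n$ (rather than some arbitrary approximant) is used.
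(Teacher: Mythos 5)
Your argument is correct and is essentially the paper's own proof: the paper likewise takes the member $\delta$ of $P_n$ containing $x$, uses that $\phi_n$ is the linear interpolant of $f$ at the endpoints so both $f(x)$ and $\phi_n(x)$ lie between $\min_{\delta} f$ and $\max_{\delta} f$, and concludes $|f(x)-\phi_n(x)|\le\omega_{\delta}\le\Omega_n$, with Lemma~\ref{ep2} then giving (uniform) convergence. Your explicit remark that the nodes are chosen with $d_i=f(a_i)$ just makes precise an assumption the paper leaves implicit.
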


\begin{theorem} The sequence $\{\Phi_{n}\}$ converges uniformly to a function $F$ that is an antiderivative of $f$.
\end{theorem}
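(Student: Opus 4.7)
The plan is to prove the theorem in two stages: first show that $\{\Phi_n\}$ is uniformly Cauchy, so that it converges uniformly to a continuous function $F$, and then show that $F'=f$. Both stages rest on the key observation that each $\Phi_n$ is differentiable on all of $[a,b]$ with $\Phi_n' = \phi_n$: this holds in the interior of each subinterval of $P_n$ by direct differentiation of the piecewise quadratic, and at a partition point $a_i$ the left and right derivatives of $\Phi_n$ are $m_{i-1}a_i + b_{i-1}$ and $m_i a_i + b_i$, which agree precisely because $\phi_n$ is continuous at $a_i$. The construction also gives $\Phi_n(a) = 0$ for every $n$, which we will exploit.

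For the uniform Cauchy property, fix $m \ge n$ and consider $g := \Phi_m - \Phi_n$, which is continuous on $[a,b]$, has $g(a) = 0$, and satisfies $g' = \phi_m - \phi_n$. For any $x\in[a,b]$, the Mean Value Theorem on $[a,x]$ yields some $\xi$ with
\[
|\Phi_m(x) - \Phi_n(x)| = |g'(\xi)|\,(x-a) \le (b-a)\,|\phi_m(\xi) - \phi_n(\xi)|.
\]
Two applications of Lemma \ref{Omega4} give $|\phi_m - \phi_n| \le \Omega_m + \Omega_n \le 2\Omega_n$ (using that $P_m$ refines $P_n$, so $\Omega_m \le \Omega_n$), so $|\Phi_m(x) - \Phi_n(x)| \le 2(b-a)\,\Omega_n$ uniformly in $x$. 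By Lemma \ref{ep2}, $\Omega_n \to 0$, so $\{\Phi_n\}$ is uniformly Cauchy on $[a,b]$ and converges uniformly to some continuous $F$.

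To show $F' = f$, fix $x \in [a,b]$ and $\epsilon > 0$. Choose $n_0$ via Lemma \ref{ep2} so that $\Omega_N < \epsilon/2$ for every $N \ge n_0$, and choose $\delta > 0$ via continuity of $f$ at $x$ so that $|f(t) - f(x)| < \epsilon/2$ whenever $|t-x| < \delta$. For any $h$ with $0 < |h| < \delta$ and $x+h \in [a,b]$, and any $N \ge n_0$, the Mean Value Theorem applied to $\Phi_N$ between $x$ and $x+h$ produces a point $\xi_N$ with $(\Phi_N(x+h) - \Phi_N(x))/h = \phi_N(\xi_N)$, whence
\[
\left|\frac{\Phi_N(x+h) - \Phi_N(x)}{h} - f(x)\right| \le |\phi_N(\xi_N) - f(\xi_N)| + |f(\xi_N) - f(x)| < \Omega_N + \tfrac{\epsilon}{2} < \epsilon.
\]
Letting $N \to \infty$ with $h$ fixed, uniform convergence of $\Phi_N$ to $F$ converts the difference quotient on the left to $(F(x+h)-F(x))/h$, and the strict inequality relaxes to $\le \epsilon$. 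This bound holds for all admissible $h$ with $|h| < \delta$, so $F'(x) = f(x)$.

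The main obstacle, and the distinctive content of Lebesgue's construction, is the verification that $\Phi_n$ is truly differentiable at every partition point with derivative $\phi_n$; without this the Mean Value Theorem would not be directly applicable to $\Phi_n$ on $[a,b]$, and both stages of the argument would collapse. A secondary subtlety is that in the difference-quotient estimate the right-hand bound $\epsilon$ is independent of $N$ (since it uses the uniform control $\Omega_N < \epsilon/2$ for \emph{all} $N \ge n_0$), which is precisely what allows us to pass to the limit in $N$ while preserving the bound and so recover $F'(x) = f(x)$ without ever constructing an integral.
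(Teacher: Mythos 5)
Your proof is correct, but it takes a genuinely different route from the paper. The paper's proof is a two-line appeal to standard results: Lemma~\ref{Omega4} plus Rudin's Theorem 7.9 give $\phi_n \to f$ uniformly, and Rudin's Theorem 7.17 (term-by-term differentiation: each $\Phi_n$ differentiable, $\Phi_n'=\phi_n$ uniformly convergent, and $\{\Phi_n(x_0)\}$ convergent at one point) then yields uniform convergence of $\Phi_n$ to some $F$ with $F'=\lim \phi_n = f$. You instead reprove the needed special case of that machinery from scratch: you verify $\Phi_n'=\phi_n$ on all of $[a,b]$ (including the matching of one-sided derivatives at partition points), note $\Phi_n(a)=0$, use the Mean Value Theorem and the bound $|\phi_m-\phi_n|\le 2\Omega_n$ to get the uniform Cauchy property, and then run a direct $\epsilon$-argument on difference quotients to get $F'=f$. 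What the paper's approach buys is brevity and alignment with its stated goal of using results ``normally found in an undergraduate course in analysis''; what your approach buys is self-containment (only the MVT is needed, which is closer in spirit to Lebesgue's original argument) and, notably, explicit verification of the hypotheses that the paper's citation of Theorem 7.17 uses tacitly --- that each $\Phi_n$ is differentiable at the partition points with derivative $\phi_n$, and that the sequence converges at the single point $a$. Your second-stage argument is also slightly simpler than the general Rudin 7.17 proof because you can exploit the continuity of the limit $f$ and the uniform bound $\Omega_N<\epsilon/2$ for all $N\ge n_0$ before passing to the limit in $N$. Both arguments are sound; yours trades economy for transparency.
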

\begin{proof}
By Lemma~\ref{Omega4} and Theorem 7.9 of \cite{Rudin}, we know that $\phi_{n} \rightarrow f$ uniformly on $[a,b]$.  By Theorem 7.17 of \cite{Rudin}, $\Phi_{n}$ converges uniformly to $F$ and
\[\phi_{n}(x) \rightarrow F^{\prime}(x)\]
for each $x \in [a,b]$.  Thus, $F^{\prime} = f$ on $[a,b]$.
\end{proof}

Lebesgue finished his paper by proving that the integral of $f$ exists on $[a,b]$. He did this by applying a construction that he developed in his proof that $F^{\prime} = f$.

 \end{document}